\newtheorem{theorem}{Theorem}[section]
\newtheorem{proposition}[theorem]{Proposition}
\theoremstyle{definition}                               
\newtheorem*{example*}{Example}
\theoremstyle{remark} 
\newtheorem*{remark*}{Remark}
\numberwithin{equation}{section}
\DeclareMathOperator{\Pic}{\sf Pic}
\DeclareMathOperator{\End}{\sf End}
\DeclareMathOperator{\Aut}{\sf Aut}
\DeclareMathOperator{\Div}{Div}
\DeclareMathOperator{\res}{res}
\DeclareMathOperator{\Hom}{Hom}
\DeclareMathOperator{\dlog}{dlog}
\def\O{\mathcal{O}}
\def\K{\mathbb{K}}
\def\Z{\mathbb{Z}}
\def\L{\mathcal{L}}
\def\D{\mathcal{D}}
\def\c{\mathbb{C}}
\def\d{\partial}
\def\S{\Sigma}
\def\I{{\sf{Id}}}
\begin{document}
\title{Picard groups of differential operators on curves}
%
%    Information for first author
%
%	\author{Yuri Berest}
%
%    address of record for the research reported here
%
%	\address{Department of Mathematics, Cornell University,
%        Ithaca, NY 14853, USA}
%	\email{berest@math.cornell.edu}
%
%    Information for second author
     \author{George Wilson}
     \address{Mathematical Institute, 24--29 St Giles, Oxford OX1 3LB, UK}
      \email{wilsong@maths.ox.ac.uk}
\date{October 16, 2010}
\begin{abstract}
These notes are a supplement to the first part 
of \cite{CH2}, concerning the Picard group of $\, \D(X) \,$, where 
$\, X \,$ is an affine curve. The main new fact is that the exact 
sequence of \cite{CH2} describing $\, \Pic \D \,$ is split.
\end{abstract}
\maketitle

\section{Introduction}

Let $\, X \,$ be a smooth irreducible complex affine curve.   
To make the statements that follow as simple as possible, 
we shall assume that $\, X \,$ has no nontrivial 
automorphisms, and also that the ring $\, \O(X) \,$ has no nonconstant 
units (that is the ``case of general position'': we shall remove these 
assumptions in section~\ref{gencase} below).  
Let $\, \D \equiv \D(X) \,$ be the ring of differential 
operators on $\, \O(X) \,$, and let $\, \Pic \D \,$ be the group 
of (isomorphism classes of) autoequivalences of the category of 
left $\, \D$-modules.  In \cite{CH2}, Cannings and Holland proved 
the following.
\begin{theorem}
\label{CHa}
There is an exact sequence of groups
\begin{equation}
\label{CHseq}
0 \,\to\, \Omega^1(X) \,\to\, \Pic \D \,\to\, \Pic X \,\to\, 0 \ .
\end{equation}
\end{theorem}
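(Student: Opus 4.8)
The plan is to reformulate everything through Morita theory: by the Eilenberg--Watts theorem, $\Pic\D$ is the group of isomorphism classes of invertible $(\D,\D)$-bimodules under $\otimes_\D$, an autoequivalence being $-\otimes_\D P$. Since $\D=\D(X)$ is a simple Noetherian domain, any invertible bimodule $P$ can be realised inside the quotient division ring $Q:=\Frac(\D)$: the bimodule $P\otimes_\D Q$ is invertible over $Q$, hence free of rank one on each side, so $P$ embeds as a $\D$-sub-bimodule of $Q$ whose left action differs from the standard one by an automorphism of $Q$, and which is invertible there. Controlling which automorphisms of $Q$ and which sub-bimodules occur is where the hypotheses on $X$ --- no automorphisms, no nonconstant units --- enter.

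The embedding $\Omega^1(X)\hookrightarrow\Pic\D$ is the easy half. To $\omega\in\Omega^1(X)$ one attaches the flat line bundle $\mathcal E_\omega=(\O_X,\de+\omega)$ --- every $1$-form on a curve is closed, so $\de+\omega$ is a connection --- which, as an $\O$-coherent $\D$-module, is invertible for $\otimes_{\O_X}$; thus $F_\omega:=\mathcal E_\omega\otimes_{\O_X}-$ is an autoequivalence with quasi-inverse $F_{-\omega}$. Functoriality of $\otimes_{\O_X}$ gives $\mathcal E_\omega\otimes_{\O_X}\mathcal E_{\omega'}\cong\mathcal E_{\omega+\omega'}$, so $\omega\mapsto[F_\omega]$ is a group homomorphism; it is injective because a $\D$-module isomorphism $\mathcal E_\omega\cong\O_X$ would be multiplication by a unit $u\in\O(X)^\times$ with $\omega=\dlog u$, and the general-position hypothesis forces $u\in\c^\times$, hence $\omega=0$. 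Concretely, $F_\omega$ is induced by the gauge automorphism $\de\mapsto\de+\omega$ of $\D$ itself, which is a well-defined algebra automorphism precisely because $\de\omega=0$ on a curve.

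For the surjection $\Pic\D\twoheadrightarrow\Pic X$ I would single out the distinguished $\D$-module $\O(X)=\D/\D\,\Theta(X)$, where $\Theta(X)$ is the (globally generated, rank-one) module of vector fields on $X$. It is simple --- a nonzero ideal of $\O(X)$ stable under all vector fields would define a proper nonempty closed subscheme of the irreducible smooth curve $X$ invariant under all vector fields, which is impossible --- and holonomic, with characteristic variety the zero section of $T^*X$. For an autoequivalence $F$ the module $F(\O(X))$ is again simple; the crucial point is that it is in fact a flat line bundle $(L,\nabla)$, i.e.\ an $\O$-coherent $\D$-module invertible over $\O_X$, and one then sets $\pi([P]):=[L]\in\Pic X$. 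Granting this, $\pi$ is a homomorphism killing the image of $\Omega^1(X)$ (since $\mathcal E_\omega\otimes_{\O_X}\O(X)=\mathcal E_\omega$ has underlying bundle $\O_X$), and it is surjective: every $L\in\Pic X$ admits a connection $\nabla$, the Atiyah obstruction lying in $H^1(X,\Omega^1_X)=0$ for affine $X$, and $(L,\nabla)\otimes_{\O_X}-$ is an autoequivalence carrying $\O(X)$ to $(L,\nabla)$.

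It remains to prove exactness in the middle, which together with the well-definedness of $\pi$ is the heart of the matter. If $\pi([P])=0$ then $F(\O(X))\cong\mathcal E_\omega$ for a unique $\omega$, and replacing $F$ by $F_{-\omega}\circ F$ reduces us to the case $F(\O(X))\cong\O(X)$; it then suffices to show that the associated invertible bimodule $P$ is isomorphic to $\D$. I would attack this by locating $P$ inside $Q$: the condition $P/P\,\Theta(X)\cong\D/\D\,\Theta(X)$, together with the fact that the absence of automorphisms of $X$ limits the twisting automorphism of $Q$ to a gauge automorphism $\de\mapsto\de+\omega$ (already absorbed by the $\Omega^1$-part), should force $\omega=0$ and then $P=\D$, by an induction on the order filtration using $\gr\D=\O(T^*X)$. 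I expect this step --- equivalently, the statement that \emph{every} invertible $\D$-bimodule arises from a flat line bundle on $X$ exactly as the elements of $\Omega^1(X)$ do, which also supplies the $\O$-coherence of $F(\O(X))$ invoked above --- to be the main obstacle. A structurally cleaner route to the same conclusion is to represent $F$ by a Fourier--Mukai-type kernel on $X\times X$ and argue that the no-automorphisms hypothesis forces the kernel to be supported on the diagonal, hence to be a flat line bundle there.
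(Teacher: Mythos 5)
Your overall architecture is the right one, and it matches the paper's: identify $\Pic\D$ with invertible bimodules, realise $\Omega^1(X)$ inside it via the gauge automorphisms $\d\mapsto\d+\omega$, and try to show that every autoequivalence comes from a line bundle with connection, so that \eqref{CHseq} is just the elementary sequence \eqref{conseq}. But the step you defer --- that $F(\O(X))$ is $\O$-coherent, equivalently that every invertible $\D$-bimodule is of the form $I\D$ with left action twisted by a connection --- is not a technical loose end; it is the entire content of the theorem, and the mechanism you propose for it does not work. The assertion that ``the absence of automorphisms of $X$ limits the twisting automorphism of $Q$ to a gauge automorphism'' is exactly what has to be proved, and it is false for general reasons one might hope to invoke: an automorphism of $\D$ (or of an endomorphism ring $\End_{\D}M$) need not preserve $\O$, hence need not preserve the order filtration, so your proposed ``induction on the order filtration using $\gr\D=\O(T^*X)$'' has no starting point. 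The Weyl algebra $\D(\A^1)$, where the Fourier transform $x\mapsto\d$, $\d\mapsto -x$ scrambles $\O$ completely and where the analogue of Theorem~\ref{CHb} fails (this case is explicitly excluded in section~\ref{gencase}), shows that no soft argument of this kind can succeed: something must distinguish $\O$ intrinsically inside $\D$.

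The paper closes this gap with three specific inputs, none of which appears in your sketch. First, Stafford's Lemma~4.2 lets one represent the bimodule by a \emph{fat} right ideal $M$ (one with $M\cap\O\neq 0$). Second, the Cannings--Holland classification from \cite{CH1} identifies fat right ideals with ``primary decomposable'' subspaces $V\subseteq\O$ and computes $\End_{\D}M=\D(V)$, with $\D_0(V):=\D(V)\cap\K$ a subring of $\O$ birational to it. Third --- and this is the decisive point --- Makar-Limanov's theorem \cite{ML} that $\O$ is the \emph{unique} maximal abelian ad-nilpotent subalgebra of $\D$ forces the left-module-structure isomorphism $\varphi:\D\to\D(V)$ to carry $\O$ onto $\D_0(V)$; smoothness of $X$ and the no-automorphism hypothesis then make this the identity on $\O$, so $V$ is an honest ideal $I$, $M=I\D$, and $\varphi$ is a connection on the line bundle corresponding to $I$. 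Your Fourier--Mukai variant faces the same obstruction in different clothing: knowing that the kernel is supported on the diagonal as an $\O\otimes\O$-module is again equivalent to knowing that the left and right copies of $\O$ in the bimodule are compatible, i.e.\ to the mad-subalgebra statement. Separately, note that the well-definedness and multiplicativity of your map $\pi$ also depend on this same unproved coherence claim, so as written the surjection onto $\Pic X$ is not yet constructed either.
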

Here (as usual) $\, \Omega^1(X) \,$ is the additive group of regular 
differentials on $\, X \,$, and $\, \Pic X \,$ is the group of (algebraic) 
line bundles.  

Theorem~\ref{CHa} almost determines $\, \Pic \D \,$; however, there remains the 
question of the group extension in \eqref{CHseq}. 
\begin{proposition}
\label{split}
The sequence \eqref{CHseq} is split.  
\end{proposition}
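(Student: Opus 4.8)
The plan is to show that \eqref{CHseq} admits a group‑theoretic section, and I would do this by showing that its extension class vanishes. First note that $\Omega^1(X)$, being a $\c$-vector space, is a divisible torsion‑free abelian group, hence injective in the category of abelian groups; so $\mathrm{Ext}^1_\Z\bigl(A,\Omega^1(X)\bigr)=0$ for every abelian group $A$. Next, $\Omega^1(X)$ is central in $\Pic\D$: tensoring a left $\D$-module by a fixed line bundle carrying a flat connection commutes, up to canonical isomorphism, with twisting its connection by a regular $1$-form. Thus \eqref{CHseq} is a central extension of the abelian group $\Pic X$ by the injective abelian group $\Omega^1(X)$; by the universal‑coefficient theorem, together with the fact that $H_2(G;\Z)\cong\Lambda^2 G$ for every abelian group $G$, one gets $H^2\bigl(\Pic X,\Omega^1(X)\bigr)\cong\Hom\bigl(\Lambda^2(\Pic X),\Omega^1(X)\bigr)$, the isomorphism sending an extension to its commutator pairing. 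Consequently the proposition is equivalent to the statement that this commutator pairing is identically zero --- equivalently, that $\Pic\D$ is \emph{abelian}.

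To see that $\Pic\D$ is abelian I would use the description of $\Pic\D$ underlying Theorem~\ref{CHa}. Every line bundle $L$ on the affine curve $X$ carries an algebraic flat connection $\nabla$ (the Atiyah obstruction lies in $H^1(X,\Omega^1_X)=0$, and integrability is automatic on a curve), and such a pair $(L,\nabla)$ determines the autoequivalence $M\mapsto (L,\nabla)\otimes_{\O_X}M$ of the category of left $\D$-modules; the sequence \eqref{CHseq} says precisely that these exhaust $\Pic\D$, that the right‑hand map is $(L,\nabla)\mapsto L$, and that the kernel consists of the connections on $\O_X$ modulo gauge equivalence by units, i.e.\ of $\Omega^1(X)$ (there being no non‑constant units). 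Composition of two such autoequivalences is computed by the tensor product $(L_1,\nabla_1)\otimes_{\O_X}(L_2,\nabla_2)$ of $\O_X$-modules with connection, and the symmetry isomorphism $(L_1,\nabla_1)\otimes(L_2,\nabla_2)\xrightarrow{\ \sim\ }(L_2,\nabla_2)\otimes(L_1,\nabla_1)$ is horizontal, hence an isomorphism of the associated invertible $\D$-bimodules; so composition is commutative and the commutator pairing vanishes. Phrased the other way round, a section $\Pic X\to\Pic\D$ is exactly a choice --- compatible with tensor products --- of a flat connection on every line bundle, and the injectivity of $\Omega^1(X)$ as an abelian group is precisely what lets the (locally non‑canonical) choices be reconciled globally.

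The part I expect to require the most care is extracting from \cite{CH2} the ingredients of the second paragraph with enough precision: that there are no ``exotic'' autoequivalences beyond those recorded by \eqref{CHseq}, that $\Omega^1(X)$ really sits centrally, and that composition of autoequivalences is implemented by $\otimes_{\O_X}$ of connections and is therefore symmetric; granting these, the remaining verifications (unique divisibility of $\Omega^1(X)$, vanishing of $\mathrm{Ext}^1_\Z$ into an injective group, the universal‑coefficient computation) are routine. A more hands‑on alternative would be to construct the required monoidal section of connections directly from the geometry of $X=\bar X\setminus D$, using $1$-forms with poles along $D$ to produce flat connections on the generators of $\Pic X$; it is worth noting in this connection that the analogous sequence over the projective model --- the universal vectorial extension $0\to H^0(\bar X,\Omega^1)\to\Jac^\natural\to\Jac\to 0$ --- is non‑split \emph{as an extension of algebraic groups}, so any proof must exploit that here we only ask for a splitting of \emph{abstract} groups and that $\Omega^1(X)$ is correspondingly large and divisible.
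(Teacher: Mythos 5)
Your argument is correct, but it reaches the splitting by a genuinely different route from the paper. You work entirely homologically: granting (as the paper does, via Theorem~\ref{CHb}) that every autoequivalence comes from a line bundle with flat connection, $\Pic\D\simeq\Pic^{\flat}X$ is abelian, so \eqref{CHseq} is an extension of abelian groups; since $\Omega^1(X)$ is a $\c$-vector space it is divisible, hence injective as a $\Z$-module, and $\mathrm{Ext}^1_{\Z}\bigl(\Pic X,\Omega^1(X)\bigr)=0$ forces the extension to split. (Your detour through $H^2$ and the commutator pairing $\Lambda^2(\Pic X)\to\Omega^1(X)$ is a correct but slightly roundabout way of saying this; once you know the middle group is abelian you can quote $\mathrm{Ext}^1$ into an injective directly.) The paper instead first identifies $\Pic^{\flat}X$ with $\Omega_3^{\Z}(X)/P$ (differentials of the third kind with integer residues modulo $\dlog$ of rational functions) and then splits the residue map $\Omega_3^{\Z}(X)\to\Div X$ compatibly with $P$, by passing to the $\c$-linear surjection $\Omega_3(X)\to\Div X\otimes_{\Z}\c$ and choosing a linear section extending the identity on $P$; the same divisibility of the relevant $\c$-vector spaces is what makes this work, but the construction is explicit and, in a second version, produces a distinguished splitting (normalizing periods to be pure imaginary) that the paper needs later for the complete case. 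So your proof is shorter and avoids the divisor--differential machinery of section~\ref{dd}, at the cost of producing no explicit or natural section; conversely it makes transparent exactly which inputs are needed, namely Theorem~\ref{CHb} (no exotic autoequivalences, so $\Pic\D$ is abelian under the standing hypothesis $\Aut X=1$) and the divisibility of $\Omega^1(X)$. Do note that your argument, as it stands, is confined to the case of general position: when $\Aut X$ is nontrivial $\Pic\D$ is no longer abelian and the reduction to a vanishing commutator pairing breaks down, whereas the paper's explicit splitting is what carries over to the semidirect-product sequence \eqref{last}.
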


This sequence takes on a more familiar appearance if 
we rephrase some of the material of \cite{CH2} in terms of line bundles with 
connection.  We denote by 
$\, \Pic^{\flat} X \,$ the group of isomorphism classes of line bundles 
with (flat algebraic) connection over $\, X \,$, so that we have 
an exact sequence of abelian groups 
\begin{equation}
\label{conseq}
0 \, \to\, \Omega^1(X) \,\to\, \Pic^{\flat} X \, \xrightarrow{p} \,  
\Pic X \,\to\, 0\ ;
\end{equation}
here the map $\, p \,$ assigns to a line bundle with connection 
the underlying line bundle, and $\, \Omega^1(X) \,$ is considered as 
the space of connections on the trivial bundle.  Now, if $\, \L \,$ 
is a line bundle on $\, X \,$, with 
space of global sections $\, \Gamma_{\L} \,$, a 
connection on  $\, \L \,$ may be viewed as a structure of left  
$\, \D$-module on $\, \Gamma_{\L} \,$, extending the given structure 
of $\, \O(X)$-module.  The functor $\, \Gamma_{\L} \otimes_{\O(X)} - \,\,$ 
then defines an element of $\, \Pic \D \,$, so we have 
a natural homomorphism
\begin{equation}
\label{chi}
\chi \,:\, \Pic^{\flat} X \,\to\, \Pic \D \ .
\end{equation}
It is easy to see that this map 
$\, \chi \,$ is injective: Cannings and Holland proved (in effect) 
the following.
\begin{theorem}
\label{CHb}
The map \eqref{chi} is an isomorphism.
\end{theorem}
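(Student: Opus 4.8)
The plan is to exhibit $\chi$ as the middle vertical arrow of a morphism of short exact sequences from \eqref{conseq} to \eqref{CHseq} which is the identity on the subgroup $\Omega^1(X)$ and on the quotient $\Pic X$; once the two resulting squares are shown to commute, the (short) five lemma forces $\chi$ to be an isomorphism, simultaneously recovering the injectivity noted in the text and supplying the surjectivity. Thus everything reduces to two compatibility checks, and to carry them out I would first recall from \cite{CH2} the explicit descriptions of the two maps appearing in \eqref{CHseq}: the embedding $\iota\colon \Omega^1(X)\hookrightarrow\Pic\D$ and the surjection $\pi\colon\Pic\D\to\Pic X$.

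For the left-hand square, note that a differential $\omega\in\Omega^1(X)$ is viewed in \eqref{conseq} as the connection $\nabla_\omega(f)=\de f+f\omega$ on the trivial bundle $\O(X)$ (automatically flat, since $X$ is a curve and $\Omega^2(X)=0$), and $\chi$ sends this to the autoequivalence $\O(X)\otimes_{\O(X)}-$, which on underlying $\O(X)$-modules is the identity functor but twists the $\D$-action by $\omega$. I would check that this is exactly $\iota(\omega)$; indeed the Cannings--Holland embedding is, after this identification, precisely ``twist by a closed $1$-form'' (equivalently, conjugation by $e^{g}$ where $\omega=\de g$ locally, which glues to a global functor because the local discrepancies are locally constant), so the left square commutes by an essentially formal computation.

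The right-hand square is the substantive point and the one I expect to be the main obstacle. One must show that for a line bundle with connection $\L$, the line bundle underlying the autoequivalence $\Gamma_{\L}\otimes_{\O(X)}-$, as computed by the Cannings--Holland procedure $\pi$, is $\L$ itself. This requires opening up how $\pi$ manufactures a line bundle from an invertible $\D$-bimodule $P$: informally, $P$ has generic rank one, and forgetting its $\D$-bimodule structure down to its left $\O(X)$-module structure (a rank-one projective module, since $X$ is a smooth affine curve) gives the class $\pi([P])\in\Pic X$. For $P=\Gamma_{\L}$ with bimodule structure built from the connection, this underlying left $\O(X)$-module is $\Gamma_{\L}$ by construction, whence $\pi(\chi(\L))=[\L]=p(\L)$. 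The delicate part is matching the abstract recipe of \cite{CH2} for $\pi$ with this naive ``underlying $\O(X)$-module'' and verifying that no twist or Morita-theoretic subtlety intervenes.

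With both squares commuting and the two outer vertical maps equal to the identity, the five lemma applied to the pair \eqref{conseq} $\to$ \eqref{CHseq} yields that $\chi$ is an isomorphism, which is Theorem~\ref{CHb}. As an alternative to going through $\pi$, surjectivity of $\chi$ can be obtained directly from the bimodule structure theory underlying \cite{CH2}: an invertible $\D$-bimodule $P$ is a rank-one projective $\O(X)$-module on the left, its left $\D$-structure endows this line bundle with a flat connection, and the right $\D$-structure is then determined, giving $P\cong\Gamma_{\L}$ for some $\L\in\Pic^{\flat}X$; this route is more hands-on but needs the same structural input, so I would present the five-lemma argument as the clean version and remark on this alternative.
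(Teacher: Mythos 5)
Your five-lemma skeleton is a reasonable reduction in principle, but the execution of the one step you yourself flag as ``the substantive point'' rests on a false premise, and the key input that actually makes the theorem true never appears. An invertible $\D$-bimodule is \emph{not} a rank-one projective $\O(X)$-module on the left: restricting the left $\D$-action to $\O(X)\subset\D$ gives an infinitely generated $\O(X)$-module (already $\D$ itself is such). So the ``naive underlying $\O(X)$-module'' description of $\pi\colon\Pic\D\to\Pic X$ does not make sense, and the ``hands-on'' alternative at the end (``$P$ is a rank-one projective $\O(X)$-module on the left, its left $\D$-structure endows this line bundle with a flat connection'') fails at the first clause. The real difficulty is that an invertible bimodule, viewed as a right $\D$-module, embeds as a fat right ideal $M\subseteq\D$, and its left structure is an \emph{abstract} isomorphism $\varphi\colon\D\to\End_{\D}M$; nothing a priori forces $\varphi$ to carry $\O(X)$ into $\O(X)$, and without that there is no line bundle and no connection. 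The paper's proof supplies exactly this: by Stafford one may take $M$ fat, by Cannings--Holland $\End_{\D}M=\D(V)$ for $V=M.\O$ a primary decomposable subspace, and then Makar-Limanov's theorem that $\O(X)$ is the \emph{unique} mad subalgebra of $\D$ forces $\varphi(\O)=\D_0(V)$, whence (using smoothness and triviality of $\Aut X$) $V$ is an ideal $I$ of $\O$ and $M=I\D$ with $\varphi|_{\O}=\mathrm{id}$. This mad-subalgebra argument is the heart of the theorem and is absent from your proposal.

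A secondary point: even granting Theorem~\ref{CHa} as a black box, your argument is only as strong as that box, since the well-definedness and surjectivity of the map $\Pic\D\to\Pic X$ in \eqref{CHseq} already encode the entire structure theory above (in \cite{CH2} that map is defined via $M\mapsto M.\O$, and seeing that this lands in $\Pic X$ at all uses Makar-Limanov). The left-hand square and, once $\pi$ is correctly described, the right-hand square (one checks $I\D.\O=I$) do commute, so the five-lemma route can be made rigorous as a formal deduction from \cite{CH2}; but it then merely relocates rather than supplies the proof, whereas the paper's Section~6 derives the statement directly from \cite{St}, \cite{CH1} and \cite{ML}.
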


Using this isomorphism, the exact sequence \eqref{CHseq} becomes 
the sequence \eqref{conseq}, which is quite amenable to study. The proof 
of Theorem~\ref{CHb} consists in combining the main results of \cite{CH1} 
and of \cite{ML}: we give the outline in section~\ref{out} below. 

Most of the discussion above still holds, {\it mutatis mutandis}, if 
$\, X \,$ is a complete (that is, projective) curve: in that 
case we have to replace the ring $\, \D(X) \,$ by the {\it sheaf} 
$\, \D_X \,$ of differential operators on $\, X \,$. I do not know 
whether Theorem~\ref{CHb} still holds in the 
complete case; however, we certainly still have the exact 
sequence \eqref{conseq}, provided we replace $\, \Pic X \,$ 
by the group $\, \Pic^0 X \,$ of line bundles 
{\it of degree zero} (since only these admit a connection). Our 
proof that the sequence is split still holds in the complete 
case.  That contrasts with the known fact (see \cite{M}) that  
$\, \Pic^{\flat} X \,$ is the {\it universal} 
extension of the Jacobian $\, \Pic^0 X \,$ by a vector group, thus 
in some sense as far as possible from being split.  The explanation 
is that this last statement considers \eqref{conseq} as an extension of 
complex algebraic groups, and the distinguished 
splitting described below is a splitting only of {\it real} Lie groups.
In the affine case, there is no natural algebraic structure on 
the groups in \eqref{conseq}: indeed, $\, \Pic X \,$ is typically a quotient 
of a torus by a countably infinite subgroup, so the only possibility 
seems to be to regard it just as a huge abstract group. 
However, if $\, X \,$ is 
obtained from a complete curve $\, \S \,$ by removing just one point, 
then $\, \Pic X \,$ is canonically identified with  $\, \Pic^0 \S \,$, 
so we are in an awkward intermediate situation.

The paper is organized as follows.  In section~\ref{dd} I review the main 
technical device used in the proof of Proposition~\ref{split}, namely the 
description of $\, \Pic^{\flat} X \,$ in terms of differentials of 
the third kind on $\, X \,$ (see \cite{M}). I give a self-contained 
account of this which is less sophisticated than the one in \cite{M}; 
it uses arguments that will be familiar to readers of \cite{CH2}.  
Section~\ref{spl} gives two constructions of splittings of the sequence 
\eqref{conseq}.  The first is purely algebraic, but involves 
an arbitrary choice of basis for an infinite-dimensional vector space. 
The second construction gives the distinguished splitting mentioned above;
however, it involves analytic considerations. From an algebraic point of 
view there seems to be no natural splitting of the sequence \eqref{CHseq}, 
which is no doubt why none was found in \cite{CH2}. In section~\ref{complete} 
I explain very briefly the claim above that for a complete curve our 
distinguished splitting is a splitting of real Lie groups; and 
section~\ref{gencase} gives the small changes needed to treat 
the case of a general affine curve (possibly with automorphisms and units). 
Finally, in section~\ref{out} we sketch the proof of the 
basic Theorem~\ref{CHb}.

\section{Differentials and divisors}
\label{dd}
Let $\, \Div X \,$ be the group of divisors on  a curve $\, X \,$, 
and let $\, \K \,$ be the field of rational functions on  $\, X \,$.
Then we have the homomorphism $\, \K^{\times} \to  \Div X \,$ 
assigning to a rational function its divisor of zeros and poles: its 
image is the subgroup $\, P \,$ of {\it principal divisors} and 
(as is very well-known) the quotient $\, \Div X/P \,$ is 
canonically identified with $\, \Pic X \,$. 

Slightly less well-known is the fact that $\, \Pic^{\flat} X \,$ 
has a similar description.  Let $\, \Omega_3(X) \,$ be the (additive) 
group of {\it differentials of the third kind} on $\, X \,$ (that is, 
rational differentials with only simple poles), and let 
$\, \Omega_3^{\Z}(X) \,$ be the subgroup of differentials with integer 
residues\footnote{Some authors include this in the definition of 
``third kind''; others call any rational differential ``of the third kind''.} 
at each pole.  There is an obvious map 
\begin{equation}
\label{reseq}
\res \,:\, \Omega_3^{\Z}(X) \,\to \,  \Div X
\end{equation}
which assigns to a differential $\, \omega \,$ the divisor 
$\, \sum_{x \in X} \,(\res_x \omega)\,x \,$: its kernel is $\, \Omega^1(X) \,$. 
We have also the map $\, \dlog : \K^{\times} \to \Omega_3^{\Z}(X) \,$: 
we shall identify its image with $\, P \,$, so that the map \eqref{reseq} 
restricts to the identity on $\, P \,$.
\begin{proposition}
\label{diff}
The quotient $\, \Omega_3^{\Z}(X)/P  \,$ can be canonically 
identified with $\, \Pic^{\flat} X \,$.
\end{proposition}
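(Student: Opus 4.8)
The plan is to construct a map $\Omega_3^{\Z}(X) \to \Pic^{\flat} X$ directly, using the residue data of a third-kind differential to build a line bundle with connection, and then to check that its kernel is exactly $P$. Given $\omega \in \Omega_3^{\Z}(X)$, set $D = \res(\omega) = \sum_x (\res_x \omega)\, x \in \Div X$, and let $\L_D$ be the associated line bundle on $X$, with sheaf of sections $\O_X(D)$ (or its module of global sections $\Gamma_{\L_D}$ in the affine picture). The idea is that $\omega$, having integer residues matching the orders of $D$, defines a meromorphic connection on the trivial bundle whose local monodromies around the points of $D$ are trivial after the twist by $\O_X(D)$; concretely, if $t$ is a local coordinate at $x$ with $\res_x \omega = n$, then $d - \omega$ acting on $t^{-n}$ produces something regular, so the operator $d - \omega$ transports to a genuine (everywhere-regular, hence flat, since $X$ is a curve) connection $\nabla_\omega$ on $\L_D$. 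This gives a well-defined element $\chi(\omega) = (\L_D, \nabla_\omega) \in \Pic^{\flat} X$, and the map is visibly additive in $\omega$ (tensor product of bundles-with-connection corresponds to addition of connection forms and of divisors).

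Next I would verify the two things needed to pass to the quotient and get an isomorphism. First, surjectivity: any line bundle with connection $(\L, \nabla)$ on $X$ has $\L \cong \L_D$ for some $D$ (as $\Pic X = \Div X / P$), and once we fix such an isomorphism the connection $\nabla$ becomes $d - \eta$ for a rational differential $\eta$; the condition that $\nabla$ be regular on $\L_D$ forces $\eta$ to have at worst simple poles with residues given by the multiplicities of $D$, i.e. $\eta \in \Omega_3^{\Z}(X)$ and $\res(\eta) = D$. Hence $(\L,\nabla)$ is in the image. Second, I must identify the kernel. If $\chi(\omega)$ is trivial in $\Pic^{\flat} X$, then $\L_D$ is trivial, so $D = (f)$ is principal for some $f \in \K^\times$; trivializing $\L_D$ via $f$ carries $\nabla_\omega$ to a connection $d - \omega'$ on the \emph{trivial} bundle that is moreover flat and regular, and the bookkeeping shows $\omega' = \omega - \dlog f$. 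Triviality of $(\O_X, d - \omega')$ as a bundle-with-connection means there is a global horizontal section, i.e. a unit $u \in \O(X)^\times$ with $du = \omega' u$; under our general-position hypothesis $\O(X)$ has no nonconstant units, so $\omega' = 0$, giving $\omega = \dlog f \in P$. Conversely every element of $P$ clearly maps to $0$. Therefore $\chi$ descends to an isomorphism $\Omega_3^{\Z}(X)/P \xrightarrow{\sim} \Pic^{\flat} X$, and by construction it is compatible with the residue/divisor maps, so the canonicity (commutation with \eqref{reseq} and the description $\Pic X = \Div X/P$) is automatic.

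The step I expect to be the main obstacle is making precise the claim that $\omega$ ``defines a regular connection on $\L_D$'' — that is, checking the local computation at each pole of $\omega$ that the twist by $\O_X(D)$ exactly cancels the pole of the connection form, and verifying that the resulting connection is well-defined independently of the local coordinate and glues to a global object. This is essentially a patching argument of the same flavour as those in \cite{CH2}, and I would phrase it in terms of the local sections $t^{-n}$ generating $\O_X(D)$ near $x$: one shows $(d - \omega)(t^{-n} g) = t^{-n}(dg - (\omega + n\,t^{-1}dt)g)$ and that $\omega + n\,t^{-1}dt$ is regular at $x$ by the integer-residue hypothesis, so $d-\omega$ preserves $\O_X(D)$ near $x$. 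Flatness is then free since we are on a curve (every connection on a line bundle over a smooth curve is automatically flat), which is why $\Pic^{\flat} X$ and not just ``$\Pic$ with connection'' is the right target. Everything else — additivity, the surjectivity argument, and the identification of the kernel with $P$ — is then routine unwinding of these identifications.
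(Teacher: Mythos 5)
Your overall strategy is sound and is in substance the same as the paper's: both arguments come down to the fact that a connection on the line bundle attached to a divisor $D$ is given by a rational differential with simple poles and integer residues prescribed by $D$, well defined modulo $\dlog \K^{\times}$, with surjectivity and the kernel computation then being routine. The only real difference is presentational: the paper encodes a connection as an isomorphism $\varphi : \D(X) \to \D(I_D)$ extending to an automorphism $\d \mapsto \d + \langle \omega, \d\rangle$ of $\D(\K)$, and verifies the residue condition not by local coordinates but by exhibiting the explicit differential $\omega = \sum \alpha_i \, d\beta_i$ with $\alpha_i \in I_D$, $\beta_i \in I_D^{*}$, $\sum \alpha_i \beta_i = 1$ (together with the observation that any two admissible differentials differ by an element of $\Omega^1(X)$). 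Your direct local computation on $\O_X(D)$ proves the same thing and is arguably more geometric; the paper's version buys compatibility with the $\D$-module/bimodule language of \cite{CH2} that is needed later.

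There is, however, a sign inconsistency in the step you yourself single out as the crux. With your conventions --- $\O_X(D)$ generated near $x$ by $t^{-n}$ with $n = \res_x\omega$, and connection $d - \omega$ --- your own formula gives the transported connection form $\omega + n\,t^{-1}dt$, which has residue $2n$ at $x$, not $0$; so it is \emph{not} regular ``by the integer-residue hypothesis''. Regularity requires $\res_x \omega = -n$. You must therefore either use the connection $d + \omega$ (which is the paper's convention, $\varphi(\d) = \d + \langle\omega,\d\rangle$, and makes the diagram \eqref{diag} commute with $\res$ as in \eqref{reseq}), or else send $\omega$ to $\L_{-\res\omega}$. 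This is a trivially repaired slip and does not damage the architecture of the proof, but as written the key local verification fails.
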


The identification is such that the diagram 
\begin{equation}
\label{diag}
\begin{diagram}[small]
0 &\rTo &\Omega^1(X) &\rTo &\Omega_3^{\Z}(X) &\rTo^{\res} &\Div X &\rTo &0\\
  &      &\dTo^{\I}   &       &\dTo          &      &\dTo     &      &\\
0 & \rTo &\Omega^1(X) &\rTo &\Pic^{\flat} X   &\rTo^p &\Pic X  &\rTo &0
\end{diagram}
\end{equation}
commutes.  Thus we can use the top sequence in this diagram to study 
the bottom one. 

The rest of this section is devoted to explaining 
Proposition~\ref{diff}: we shall concentrate on the case where $\, X \,$ 
is affine. Let us first review the notion of a 
(necessarily flat) connection on a line bundle  $\, \L \,$: roughly 
speaking it is a way of making $\, \D(X) \,$ act on (sections of) 
$\, \L \,$.  To be precise, let $\, \D_{\L} \,$ be the ring of 
differential operators on  
$\, \L \,$: it contains $\, \O(X) \,$ as the subalgebra 
of operators of degree $\, 0 \,$, and we may define 
a  connection on  $\, \L \,$ 
to be an isomorphism $\, \varphi : \D(X) \to \D_{\L} \,$ 
such that the restriction of $\, \varphi \,$ to $\, \O(X) \,$ is the 
identity.  Let us look at the special case where $\, \L \,$ is 
the trivial bundle $\, X \times \c \,$, so that 
$\, \D_{\L} \equiv \D(X) \,$.  Then a connection is just an automorphism 
of $\, \D(X) \,$ which fixes $\, \O(X) \,$.  Since $\, \D(X) \,$ is 
generated by $\, \O(X) \,$ and its derivations, such 
an automorphism  $\, \varphi \,$ is determined by its action on 
derivations $\, \d \,$; this action necessarily takes the form
\begin{equation}
\label{act}
\varphi(\d) \,=\, \d \,+\, \langle{\omega, \, \d \rangle}
\end{equation}
where $\, \omega \in \Omega^1(X) \,$ and $\, \langle \,,\,\rangle \,$ 
is the natural pairing between $\, 1$-forms and vector fields. 
In this way, connections on the trivial bundle are in 1-1 correspondence 
with regular 1-forms.

For a general line bundle $\, \L \,$, a connection is usually described 
in terms of locally defined 
1-forms as above, using local trivializations of $\, \L \,$;
however, in our algebraic situation, we can use the fact that $\, \L \,$ 
always has  a {\it rational} trivialization to describe a 
connection by a single rational differential, much as above.  
More precisely, let us fix a divisor $\, D = \sum n_x \,x \,$ in the class of 
$\, \L \,$. Corresponding to $\, D \,$ we have the fractional 
ideal of $\, \O(X) \,$
\begin{equation}
\label{ideal}
I_D \,:=\, \{ f \in \K \,:\, \nu_x(f) \geq -n_x \ \forall x \in X \}
\end{equation}
(as usual $\, \nu_x(f) \,$ is the order to which $\, f \,$ vanishes 
(or minus the order of pole) at $\, x \,$).  Then $\, I_D \,$ is 
isomorphic to the $\, \O(X)$-module $\, \Gamma_{\L} \,$ of sections 
of $\, \L \,$; indeed, choosing a divisor in the class of 
$\, \L \,$ is equivalent to choosing a (fractional) ideal 
$\, I \subset \K \,$ isomorphic to 
$\, \Gamma_{\L} \,$.  We may now identify $\, \D_{\L} \,$ 
with the algebra
$$
\D(I_D) \,:=\, \{ \theta \in \D(\K) \,:\, \theta . I_D \subseteq I_D \}\ .
$$
So a connection on $\, \L \,$ is an isomorphism 
$\, \varphi : \D(X) \to \D(I_D) \,$.  This extends uniquely to an 
automorphism $\, \varphi \,$ of $\, \D(\K) \,$ (restricting to the 
identity on $\, \K \,$), which must have the form \eqref{act}, with 
$\, \omega \in \Omega_{rat}(X) \,$ now a {\it rational} differential.  
If we change the choice of ideal $\, I \,$ by a factor 
$\, f \in \K^{\times} \,$, the corresponding differential changes by 
the gauge transformation $\, \omega \mapsto \omega + \dlog f \,$.  
Thus so far we have seen that $\, \Pic^{\flat} X \,$ embeds into the 
space $\, \Omega_{rat}(X)/{\dlog \K^{\times}} \,$.  To complete the 
explanation of Proposition~\ref{diff}, we have only to see what 
is the image of this embedding; that is, 
which rational differentials give rise to automorphisms 
of $\, \K \,$ that map $\, \D(X) \,$ onto $\, \D(I_D) \,$.
\begin{proposition}
\label{iii}
Let $\, \omega \,$ be a rational differential, $\, \varphi \,$ the 
corresponding automorphism of $\, \D(\K) \,$.  Then $\, \varphi \,$ 
maps $\, \D(X) \,$ onto $\, \D(I_D) \,$ if and only if \,{\rm (i)} 
$\, \omega \in \Omega^{\Z}_3(X) \,$ and \,{\rm (ii)} 
$\, \res \,\omega = D \,$\,.
\end{proposition}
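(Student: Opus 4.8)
The plan is to verify the equality $\,\varphi(\D(X)) = \D(I_D)\,$ one point at a time, which turns it into an elementary computation with differential operators over a discrete valuation ring. For each point $\, x \,$ of $\, X \,$ let $\, \O_x \,$ be the local ring of $\, X \,$ at $\, x \,$ (a DVR, since $\, X \,$ is smooth) and let $\, (I_D)_x = \O_x \otimes_{\O(X)} I_D = \{ f \in \K : \nu_x(f) \geq -n_x \} \,$. Because $\, X \,$ is a smooth curve, forming the ring of differential operators commutes with localization, so $\, \D(\O_x) \,$ and $\, \D((I_D)_x) \,$ are the localizations of $\, \D(X) \,$ and $\, \D(I_D) \,$; and since $\, \O(X) = \bigcap_x \O_x \,$ and $\, I_D = \bigcap_x (I_D)_x \,$, an operator lies in $\, \D(X) \,$ (resp.\ $\, \D(I_D) \,$) iff it lies in every $\, \D(\O_x) \,$ (resp.\ every $\, \D((I_D)_x) \,$), i.e.\ $\, \D(X) = \bigcap_x \D(\O_x) \,$ and $\, \D(I_D) = \bigcap_x \D((I_D)_x) \,$ inside $\, \D(\K) \,$. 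As $\, \varphi \,$ is a ring automorphism fixing $\, \K \,$, it is $\, \K$-linear for left multiplication, hence commutes with localization, and it is a bijection of $\, \D(\K) \,$ onto itself; combining these facts, $\, \varphi(\D(X)) = \D(I_D) \,$ holds if and only if $\, \varphi(\D(\O_x)) = \D((I_D)_x) \,$ for every $\, x \,$.

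Now fix $\, x \,$, choose a uniformizer $\, t = t_x \,$, and let $\, \d \,$ be the derivation dual to $\, dt \,$, so that $\, \D(\O_x) = \bigoplus_{k \geq 0} \O_x\,\d^k \,$. Write $\, \omega = g\, dt \,$ with $\, g \in \K \,$. Since $\, \varphi \,$ fixes functions and $\, \varphi(\d) = \d + g \,$, the ring $\, \varphi(\D(\O_x)) \,$ is the subalgebra of $\, \D(\K) \,$ generated by $\, \O_x \,$ and $\, \d + g \,$; and since $\, (I_D)_x = t^{-n_x}\O_x \,$, with multiplication by $\, t^{n_x} \,$ an $\, \O_x$-module isomorphism onto $\, \O_x \,$, we have $\, \D((I_D)_x) = t^{-n_x}\D(\O_x)\,t^{n_x} \,$, namely the subalgebra generated by $\, \O_x \,$ and $\, t^{-n_x}\d\,t^{n_x} = \d + n_x t^{-1} \,$. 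For any $\, h \in \K \,$, the subalgebra generated by $\, \O_x \,$ and $\, \d + h \,$ equals $\, \bigoplus_{k \geq 0} \O_x\,(\d + h)^k \,$ (the right-hand side is closed under multiplication because $\, (\d + h) f = f(\d + h) + \d(f) \,$ for $\, f \in \O_x \,$, and the sum is direct because $\, (\d+h)^k \,$ has order exactly $\, k \,$); in such a ring the only element of order $\, 1 \,$ with leading coefficient $\, 1 \,$ in $\, \d \,$ is of the form $\, (\d + h) + f_0 \,$ with $\, f_0 \in \O_x \,$. Hence $\, \d + g \,$ belongs to it iff $\, g - h \in \O_x \,$, and so the two subalgebras coincide iff $\, g - n_x t^{-1} \in \O_x \,$. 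This last condition says exactly that $\, \omega \,$ has at most a simple pole at $\, x \,$ and that $\, \res_x \omega = n_x \,$.

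It remains to globalize: demanding a pole of order $\, \leq 1 \,$ at every point is precisely $\, \omega \in \Omega_3(X) \,$; demanding in addition $\, \res_x \omega = n_x \,$ for all $\, x \,$ forces every residue to be an integer, hence $\, \omega \in \Omega^{\Z}_3(X) \,$ (which is (i)), and amounts to $\, \res\,\omega = \sum_x n_x\, x = D \,$ (which is (ii)); since $\, D \,$ is a divisor and $\, \omega \,$ has finitely many poles, the local conditions are vacuous away from a finite set, so the reduction is legitimate in both directions. This gives the proposition. The step that needs genuine care rather than routine bookkeeping is the first one — that $\, \D \,$ commutes with localization and that $\, \D(X) \,$, $\, \D(I_D) \,$ are the intersections of their localizations inside $\, \D(\K) \,$ — which rests on the smoothness of $\, X \,$; the subsequent analysis over the DVR $\, \O_x \,$ is elementary.
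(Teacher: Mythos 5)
Your proof is correct, but it follows a genuinely different route from the paper's. The paper argues globally: it first observes that if two differentials $\,\omega_1, \omega_2\,$ both give automorphisms carrying $\,\D(X)\,$ onto $\,\D(I_D)\,$, then $\,\omega_1 - \omega_2 \in \Omega^1(X)\,$, so it suffices to exhibit a single differential that works and has the prescribed principal parts; it then produces one explicitly as $\,\omega = \sum \alpha_i\, d\beta_i\,$ from a ``partition of unity'' $\,\sum \alpha_i \beta_i = 1\,$ with $\,\alpha_i \in I_D\,$, $\,\beta_i \in I_D^*\,$, checking by hand that $\,\varphi(\d) = \sum \alpha_i\, \d\, \beta_i\,$ preserves the relevant ideals and that $\,\omega\,$ has a simple pole with residue $\,n_x\,$ at each point of the support of $\,D\,$. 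You instead localize at every point and settle the question over each DVR $\,\O_x\,$ by comparing $\,\d + g\,$ with $\,\d + n_x t^{-1}\,$; this handles both implications in one uniform computation and makes it transparent that the condition is purely one on principal parts, but it leans on the standard (and, as you rightly flag, not entirely free) facts that $\,\D\,$ of a smooth curve commutes with localization and that $\,\D(X)\,$ and $\,\D(I_D)\,$ are recovered as $\,\bigcap_x \D(\O_x)\,$ and $\,\bigcap_x \D((I_D)_x)\,$ inside $\,\D(\K)\,$. The paper's construction avoids that machinery entirely and has the side benefit of producing an explicit global representative for each divisor class; your argument is more systematic and needs no clever choice. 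Both are complete proofs.
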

\begin{proof}
Note first that if $\, \omega_1 \,$ and $\, \omega_2 \,$ are two 
differentials such that the corresponding automorphisms $\, \varphi_1 \,$ 
and $\, \varphi_2 \,$ both map $\, \D(X) \,$ onto $\, \D(I_D) \,$, 
then $\, \varphi_2^{-1} \varphi_1 \,$ restricts to  an automorphism of 
$\, \D(X) \,$: it follows that 
$\, \omega_1 - \omega_2 \,$ is a regular differential on $\, X \,$. 
Thus it is enough to find just one automorphism $\, \varphi \,$ which 
maps $\, \D(X) \,$ onto $\, \D(I_D) \,$, and such that the 
corresponding differential $\, \omega \,$ has the principal parts 
specified by the properties (i) and (ii) 
in Proposition~\ref{iii}.  For this, let $\, I_D^* \,$ be the fractional 
ideal inverse to $\, I_D \,$: it corresponds to the divisor $\, -D \,$.  
Since $\, I_D I_D^* \,=\, \O(X) \,$, we can choose $\, \alpha_i \in I_D \,$ 
and $\, \beta_i \in I_D^* \,$ such that 
$\, \sum  \alpha_i \beta_i = 1 \,$.  We claim that the differential 
$\, \omega \,:=\, \sum \alpha_i d \beta_i $ has the required properties.  
Indeed, the automorphism $\, \varphi \,$ corresponding to $\, \omega \,$ acts 
on derivations $\, \d \,$ of $\, \K \,$ by
\begin{align*}
\varphi(\d) &\,=\, \d \,+\, \langle \,
\sum \alpha_i d \beta_i , \, \d \,\rangle\\
            &\,=\, \d \,+\, \sum \alpha_i\, 
\langle \,d \beta_i , \, \d \,\rangle\\
            &\,=\, \d \,+\, \sum \alpha_i \d(\beta_i)\\
            &\,=\, \sum \alpha_i \d \beta_i\ ,
\end{align*}
where the last step used that $\, \sum  \alpha_i \beta_i = 1 \,$.  If now  
$\, \d \,$ is a derivation of $\, \O(X) \,$, then 
\begin{align*}
(\alpha_i \d \beta_i).I_D &\,\subseteq\, (\alpha_i \d) . \O(X)\\
                          &\,\subseteq\, \alpha_i \,\O(X)\\
                          &\,\subseteq\, I_D \ ;
\end{align*}
that is, $\, \varphi(\d) \in \D(I_D) \,$, whence $\, \varphi \,$ maps 
$\, \D(X) \,$ to $\, \D(I_D) \,$. Similarly, $\, \varphi^{-1} \,$ maps 
$\, \D(I_D) \,$ to $\, \D(X) \,$, so $\, \varphi \,$ does indeed give 
an isomorphism between these two rings. It remains to check that $\, \omega \,$ 
has the properties (i) and (ii).  Fix a point $\, x \,$ in the support 
of $\, D \,$, and let $\, z \,$ be a local parameter near $\, x \,$.  
Then near $\, x \,$ the $\, \alpha_i \,$ and $\, \beta_i \,$ have the 
form
$$
\alpha_i = a_i z^{-n_x} + \ldots\,, \quad \beta_i = b_i z^{n_x} + \ldots\,,
$$
where $\, a_i \,$ and $\, b_i \,$ are constants and 
the $\,\ldots \,$ denote higher order terms.  Thus 
$\, d \beta_i \,=\, n_x b_i z^{n_x - 1}dz + \ldots \,$\,, so 
$$
\omega \,:=\, \sum \alpha_i d \beta_i 
\,=\, (\sum a_i b_i) n_x z^{-1}dz \,+\, \ldots \ .
$$
But since $\, \sum  \alpha_i \beta_i = 1 \,$, we have 
$\, \sum  a_i b_i = 1 \,$; it follows that  
$\, \omega \,$ has a simple pole at $\, x \,$ with 
residue $\, n_x \,$.  That finishes the proof of Proposition~\ref{iii}.
\end{proof}
\section{Splitting}
\label{spl}
Let us return to the diagram \eqref{diag}.  The bottom map $\, p \,$, 
which we want to show is split, is obtained from the top map $\, \res \,$ 
by dividing out by the (common) subgroup $\, P \,$ of principal 
divisors.  Thus it is enough to construct a splitting 
$$ 
s : \Div X \to \Omega_3^{\Z}(X)
$$ 
of the top map which extends the identity map on 
$\, P \,$, for this will then descend to the 
quotient to give a splitting of the bottom map.

That is almost trivial, but not quite, since $\, P \,$ is not a direct factor 
in $\, \Div X \,$ (the quotient $\, \Pic X \,$ has elements of finite 
order, while $\, \Div X \,$ is a free abelian group).  However, 
we can consider the map of larger groups
$$
\res \,:\, \Omega_3(X) \,\to\, (\Div X) \otimes_{\Z} \c \ .
$$
These are now vector spaces, so we can certainly choose a $\c$-linear 
splitting $\, s \,$ with $\, s(p \otimes 1) = p \,$ for $\, p \in P \,$.   
The very fact that $\, s \,$ {\it is} a splitting implies that it maps 
$\, \Div X\otimes 1  \,$ into $\, \Omega_3^{\Z}(X) \,$, so we are finished.

It is more satisfactory to describe a ``natural'' splitting 
of our sequence.  Let us consider first the case of a complete curve 
$\, X \,$: in that case we interpret $\, \Div X \,$ to be group of 
divisors {\it of degree zero} on $\, X \,$.  To define $\, s \,$, for 
each such divisor $\, D \,$ we have to choose a differential with 
principal parts as prescribed by (i) and (ii) in Proposition~\ref{iii}.  
There are several ways to 
normalize a differential with prescribed principal parts: the one we need 
is to make all its periods {\it pure imaginary}.  It is clear 
that the resulting map $\, s \,$ is additive; further, if 
$\, D \,$ is the divisor of a rational function $\, f \,$, then we 
have $\, s(D) = \dlog(f) \,$; that is, $\, s \,$ extends the 
identity map on the group $\, P \,$ of principal divisors, so again 
we are finished.

If $\, X \,$ is obtained from a complete curve $\, \S \,$ by removing 
a single point, the situation is equally good.  Indeed, if 
$\, D \in \Div X \,$, then $\, D \,$ has a unique extension to a 
divisor $\, \overline{D} \,$ of degree zero on 
$\, \S  \,$; if we take the above 
normalized differential $\, s(\overline{D}) \,$ and restrict it to 
$\, X \,$, we again get a distinguished splitting of the sequence 
\eqref{conseq}. 

To extend this construction to  the general case, 
when $\, X \,$ is obtained by removing several points from $\, \S \,$, 
we need to choose some way of extending  divisors on  
$\, X \,$ to divisors of degree zero on $\, \S \,$.  For example, we could 
single out one of the points ``at infinity'' and let the others have 
multiplicity $0$ in the extension; or, more democratically, we could 
let the extended divisor have the same multiplicity at each of the points at 
infinity.  I leave the choice to the reader. 
\section{The complete case}
\label{complete}

Let us return to the case where $\, X \,$ is complete, and explain 
the claim that in that case we have a splitting of 
(finite dimensional) real Lie groups.  We take an analytic point of view. 
Recall that (as for any complex manifold) there 
is a canonical identification 
$\, \Pic^{\flat} X \simeq H^1(X, \,\c^{\times}) \,$. 
From this point of view \eqref{conseq} comes from the 
cohomology sequence of the exact sequence of analytic\footnote{These  
sheaves could be interpreted algebraically, but then the 
map $ \dlog $ would not be surjective.} sheaves
$$
0 \,\to\, \c^{\times} \,\to\, \O^{\times} \, 
\xrightarrow{\dlog} \,\Omega^1 \,\to\, 0 \ .
$$
The map 
$\, \pi : \Omega_3^{\Z} \,\to\, \Pic^{\flat} X \,$ 
also has a very simple description from this point of view:  
if we identify
$$
\Pic^{\flat} X \simeq H^1(X, \,\c^{\times}) \,\simeq\, 
\Hom(H_1(X,\,\Z), \,\c^{\times}) \ ,
$$
then $\, \pi \,$ sends a differential $\, \omega \,$ to the 
homomorphism
$$
\pi(\omega) \,:\, [c] \,\mapsto\, \exp \int_c \omega \ ,
$$
where $\, [c] \,$ is the homology class of a 1-cycle $\, c \,$.  
Clearly, $\, \omega \,$ is normalized (to have imaginary periods) 
if and only if $\, \pi(\omega) \,$ takes values in the unit circle 
$\, S^1 \subset \c^{\times} \,$.  Now, as a real Lie group 
we have the polar decomposition 
$\,\c^{\times} \simeq \mathbb{R} \times S^1 \,$, in which a pair 
$\, (\lambda, \,e^{i \theta}) \in \mathbb{R} \times S^1 \,$ 
corresponds to the complex number $\, e^{\lambda + i \theta} \,$.  
Our decomposition of $\, \Pic^{\flat} X \,$ is just the product 
of $\, 2g \,$ copies of this one.
\section{The general affine case}
\label{gencase}

We have assumed so far that $\, \O(X) \,$ has no nontrivial units, 
and that $\, X \,$ has no nontrivial automorphisms; however, neither of 
these assumptions is essential.  In the general case, let $\, U \,$ 
be the group of units in $\, \O(X) \,$. If $\, u \in U \,$, then 
$\, \dlog u \in \Omega^1(X) \,$; set
$$
\overline{\Omega} \,:=\, \Omega^1(X)/{\dlog U}\ .
$$
Recall that $\, \Pic^{\flat} X \,$ is the group of 
{\it isomorphism classes} (that is, gauge equivalence classes) of 
lines bundles with connection.  Each $\, u \in U \,$ gives an 
isomorphism of connections on the trivial bundle, changing 
the corresponding differential $\, \omega \,$ by the gauge 
transformation $\, \omega \mapsto \omega + \dlog u \,$; thus 
the space of isomorphism classes of connections on the trivial bundle 
is $\, \overline{\Omega} \,$, so in the exact sequence \eqref{conseq} 
we have to replace $\, \Omega^1(X) \, $ by $\, \overline{\Omega} \,$. 
Similarly, in the diagram \eqref{diag}, we have to change  
$\, \Omega^1(X) \, $ to $\, \overline{\Omega} \,$, and also 
$\, \Omega_3^{\Z}(X) \,$ has to be replaced by the quotient 
$\, \Omega_3^{\Z}(X)/{\dlog U} \,$.   The main part of our discussion 
is then unaffected by the presence of $\, U \,$.

The automorphism group $\, \Aut X \,$ equally easy to deal with, but more 
interesting, since it gives us extra elements of $\, \Pic \D \,$. This 
group acts 
compatibly on all the groups in the split exact sequence \eqref{conseq}, 
so we get a split exact sequence 
\begin{equation}
\label{last}
0 \,\to\, \overline{\Omega} \,\to\, \Pic^{\flat} X \rtimes \Aut X \,\to\, 
\Pic X \rtimes \Aut X \,\to\, 0 \ .
\end{equation}
Further, there is a natural inclusion
\begin{equation}
\label{hook}
\chi \,:\,  \Pic^{\flat} X \rtimes \Aut X \,\to \, \Pic \D 
\end{equation}
generalizing \eqref{chi}: Cannings and Holland show\footnote{At this point 
we have to exclude the case where $\, X \,$ is isomorphic to the 
affine line.}
that it is an isomorphism.
Inserting this isomorphism into \eqref{last}, we get the exact sequence  
of \cite{CH2}, Theorem~1.15.
\begin{remark*}
It follows from Theorem~\ref{chi} that if the group $\, \Aut X \,$  is 
trivial, then $\, \Pic \D \,$ is abelian.  On the other hand, a 
nontrivial automorphism of $\, X \,$ cannot act trivially on 
$\, \overline{\Omega} \,$ (because it does not act trivially on the 
subgroup $\, d\,\O(X) \,$). So from the exact sequence \eqref{last} 
we get the following curious fact: $\, \Pic \D \,$ is abelian if and only 
if $\, \Aut X \,$  is trivial.
\end{remark*}

\section{Outline of proof of Theorem~\ref{CHb}}
\label{out}

We first translate Theorem~\ref{CHb} into the language of bimodules 
used in \cite{CH2}.  
Recall that any autoequivalence $\, T \,$ of the  category of left 
$\,\D$-modules is given by tensoring with the invertible 
$\,\D$-bimodule $\, T(\D) \,$ 
(that is the case for any algebra; see, for example \cite{B}, p.~60 et seq.). 
Given a line bundle $\, \L \,$ 
with connection, choose an ideal $\, I \subseteq \O \,$ isomorphic 
to $\, \Gamma_{\L} \,$; then as in section~\ref{dd}, the connection 
can be regarded as
an isomorphism $\, \varphi : \D \to \D(I) \,$, and the 
corresponding bimodule is $\, I \otimes_{\O} \D = I\D \,$ with the 
obvious right $\, \D$-module structure and the left $\, \D$-module structure 
defined via $\, \varphi \,$.  Note that the algebra 
$\, \D(I) = I\D I^* \,$ can be identified with the endomorphism ring 
of the right ideal $\, I\D \subseteq \D \,$.  Now (as for any Noetherian 
domain $\, \D \,$) if we are given an 
invertible $\, \D$-bimodule; we may consider it first just as a {\it right} 
$\, \D$-module; it can then be embedded as a right ideal $\, M \,$ in 
$\, \D \,$, and the 
structure of left  $\, \D$-module is given by some isomorphism 
$\, \varphi : \D \to \End_{\D} M \,$.  So Theorem~\ref{CHb} amounts to 
the claim that (in our case) we can always choose $\, M \,$ to be of the form 
$\, I\D \,$, and furthermore that the isomorphism $\, \varphi \,$  then 
restricts to the identity map on $\, \O \,$.  It is in this form that 
the theorem is proved in \cite{CH2}. 

In broad outline, the proof goes as follows.  By \cite{St}, Lemma~4.2, 
we may assume that $\, M \,$ is {\it fat}, that is, $\, M \cap \O \neq 0 \,$. 
The main 
result of \cite{CH1} is that the assignment $\, M \mapsto V := M.\O \,$ 
defines a bijection between the fat right ideals in $\, \D \,$ and  
certain subspaces $\, V \subseteq \O \,$ (called 
``primary decomposable''); and furthermore that $\, \End_{\D} M \,$ then gets 
identified with the algebra
$\, \D(V) := \{ D \in \D(\K) : D.V \subseteq V \} \,$.
Thus the map defining the left $\, \D$-module structure on $\, M \,$ can 
be regarded as an isomorphism $\, \varphi : \D \to \D(V)\,$. 
Concerning $\, V \,$, we need only know that the subalgebra 
$\, \D_0(V) := \D(V) \cap \K \,$ is contained in $\, \O \,$, and that the 
inclusion $\, \D_0(V) \subseteq \O \,$ is a birational isomorphism.
Now we use the main result of \cite{ML}, 
which states that $\,\O\,$ is the {\it unique} maximal abelian ad-nilpotent 
(mad) subalgebra of $\, \D \,$.  Since $\, \D_0(V) \,$ is a mad subalgebra 
of $\, \D(V) \,$, that implies that $\, \varphi \,$ must map $\,\O\,$ 
isomorphically onto $\, \D_0(V) \,$, so that we have a birational 
isomorphism $\, \O \to \D_0(V) \subseteq \O \,$.  Because $\, X \,$ 
is smooth, this must be a genuine isomorphism, so under our assumption 
that $\, \Aut X \,$ is trivial, it must be the identity.  
Thus $\, \D_0(V) = \O \,$, and V is an ideal $\, I \,$ of $\, \O \,$.   
Under the bijection mentioned above, the fat right ideal of $\, \D \,$ 
corresponding to $\, I \,$ is $\, I\D \,$. That completes the proof.

\vspace*{0.2cm}
\scriptsize
\noindent
\textbf{Acknowledgments}. This work was completed during a visit to 
Cornell University; it is a pleasure to thank Cornell Mathematics Department, 
and especially Yuri Berest, for their hospitality.  The support of 
NSF grant DMS 09-01570 is gratefully acknowledged.

\normalsize

%%%%%%%%%%%%%%%%%%%%%%%%%%%%%%%%%%%%%%%%%%%%%%%%%%%%%%%%%%%%%%%%%%%%%%%%%%%
\bibliographystyle{amsalpha}

\end{document}